\documentclass[ASNA]{USG}

\usepackage{mathtools}
\usepackage{mathrsfs}
\usepackage{bm}
\usepackage{graphicx}
\usepackage{subcaption}
\newcommand{\RR}{\mathbb{R}}

\newcommand{\Sym}{\operatorname{Sym}}

\newcommand{\Bn}{\mathcal{B}_{R}}
\newcommand{\LL}{\mathcal{L}}
\newcommand{\nn}{\boldsymbol{n}}
\newcommand{\x}{\boldsymbol{x}}
\newcommand{\y}{\boldsymbol{y}}
\newcommand{\bb}{\boldsymbol{b}}

\numberwithin{equation}{section}

\volume{0}          
\copyyear{2026}
\startpage{1}
\articledoi{}       

\begin{document}

\title{Symmetry Inheritance and Symmetry-Reduced Finite Element Analysis for Second-Order Linear Elliptic Problems with Robin Boundary Conditions}
\author[1]{Wei Jiang}
\author[1]{Xianlong Pan}

\authormark{PAN AND JIANG}
\titlemark{SYMMETRY OF ROBIN ELLIPTIC PROBLEMS}

\address[1]{\orgdiv{School of Physics and Mechatronics Engineering, }%
\orgname{Guizhou Minzu University, }%
\orgaddress{\city{Guiyang, }\postcode{550025, }\country{China}}}

\corres{Wei Jiang, School of Physics and Mechatronics Engineering, Guizhou Minzu University, Guiyang 550025, China; Email: \texttt{jwmathphy@163.com}}


\keywords{Domain reduction | elliptic equation | group actions | robin boundary condition | symmetry group  }
\abstract[ABSTRACT]{A symmetry framework is developed for second-order linear elliptic equations subject to Robin boundary conditions on bounded domains. Orthogonal transformations of the domain are represented through left group actions on scalar, vector, and second-order tensor fields. The corresponding transformation rules for the gradient, divergence, diffusion flux, and conormal boundary term are derived in detail. Based on these relations, symmetry groups are introduced for the principal coefficient tensor, the first-order and zeroth-order coefficients, the differential operator, the Robin coefficient, and the boundary operator. The symmetry properties of the volume and boundary source terms are then incorporated into a common symmetry group for the complete boundary value problem. Under the assumption of unique solvability, it is proved that every element of this common group is also a symmetry of the solution. For reflection symmetries, homogeneous generalized Neumann conditions are obtained on artificial symmetry boundaries, leading to an exact reduction of the computational domain. A corresponding finite element formulation is presented, and numerical results are provided to verify the theoretical symmetry properties and the validity of the resulting domain-reduction strategy. Three examples illustrate radial reduction from multiple dimensions to one dimension, reflection-based domain reduction, and a variable-coefficient problem in which all coefficients and source terms are nonzero.}

\maketitle

\section{Introduction}

Second-order elliptic boundary value problems provide fundamental models in mathematical physics and engineering. In computational electromagnetics, scalar elliptic and Helmholtz-type equations arise in electrostatic and magnetostatic formulations, two-dimensional TE/TM reductions, waveguide models, and truncated-domain formulations with impedance-type boundary conditions; related finite element foundations may be found in \cite{Hiptmair2002,Monk2003}. Robin boundary conditions are equally classical in elliptic theory, where they describe a flux response coupled to the trace of the unknown and occur naturally in heat transfer, diffusion, acoustics, and impedance-type models. Their analytical treatment is well established in the theory of elliptic boundary value problems \cite{LionsMagenes1972,Daners2000,McLean2000}.

Symmetry methods for differential equations originate from the general theory of transformation groups and invariance \cite{Ovsiannikov1982,BlumanKumei1989,Olver1993}. The present work, however, concerns finite orthogonal transformations of the physical domain rather than Lie-symmetry reduction. For problems posed on geometrically symmetric domains, finite-group and representation-theoretic ideas have long been used to decompose field problems and to reduce the computational domain. Early developments include the use of finite-group representations in field computation \cite{Ballisti1982}, the systematic treatment of symmetric boundary value problems and finite element reduction by Bossavit \cite{Bossavit1986,Bossavit1993}, and applications to boundary element and three-dimensional eddy-current computations by Lobry and co-workers \cite{LobryBroche1994a,LobryBroche1994b,Lobry1996}. More recent work shows that symmetry remains relevant to modern finite element and computational electromagnetic formulations, including invariant bases in finite element exterior calculus \cite{Licht2024}, group-theoretic computation of waveguide modes \cite{Angiulli2020,GarciaContreras2021}, and group-representation-based finite element reduction for symmetric inhomogeneous waveguides \cite{Chu2024}.

A separate and extensive literature concerns the numerical approximation of elliptic problems with Robin boundary conditions. Classical isoparametric analysis already included Robin data on curved boundaries \cite{Lenoir1986}. Least-squares and mixed finite element formulations were subsequently developed for second-order elliptic Robin problems \cite{Lee1999,Konno2011}, while finite element well-posedness, regularity, and convergence have also been studied for generalized Robin conditions \cite{Kashiwabara2015}. Other developments include superconvergent recovery for Robin elliptic problems \cite{DuWuZhang2020}, isoparametric approximation on curved domains \cite{Edelmann2021}, unfitted $\phi$-FEM for natural Neumann and Robin conditions \cite{Duprez2023}, finite difference treatment of Robin data on irregular domains \cite{Chai2020}, and recent finite element analysis of generalized Robin problems on curved domains \cite{Kashiwabara2025}. Related numerical studies include finite element analysis for elliptic boundary value problems on nonsmooth domains \cite{Apel1996}, as well as the well-posedness and numerical treatment of variable-coefficient Helmholtz problems with Robin boundary conditions by means of boundary-domain integral equations \cite{FresnedaCaruso2025}.

These two lines of research address different aspects of the problem. Symmetry-based field computation is primarily concerned with group actions, invariant subspaces, and computational reduction, whereas most numerical studies of Robin problems focus on discretization, stability, regularity, and error analysis. For a Robin problem, however, a symmetry transformation must preserve not only the differential operator and the domain but also the conormal flux, the Robin coefficient, and the boundary source. Consequently, the boundary equation must be incorporated into the group action at the same level as the governing differential equation.

The present work extends the symmetry-inheritance framework previously developed for second-order linear elliptic Dirichlet boundary value problems \cite{PanJiang2026} to Robin boundary conditions. For an orthogonal transformation, left group actions are introduced consistently on scalar, vector, and second-order tensor fields. The transformation properties of the gradient, divergence, diffusion flux, and conormal term are then used to define the symmetry groups of the principal coefficient tensor, the first-order and zeroth-order coefficients, the differential operator, the Robin coefficient, and the Robin boundary operator. By including the volume source and the boundary source, a common symmetry group is obtained for the complete boundary value problem.

The remainder of the paper is organized as follows. Section~2 introduces the Robin boundary value problem and the orthogonal group actions. Section~3 defines the coefficient and operator symmetry groups and proves the symmetry-inheritance theorem. Section~4 derives the reflection-induced artificial boundary condition, the reduced-domain problem and the finite element discretization. Section~5 presents three representative examples. Conclusions and possible extensions are summarized in the final section.

\section{The Robin Boundary Value Problem and Group Actions}\label{sec:BVPaction}
Let $\Omega\subset\RR^n$ be a bounded Lipschitz domain with boundary $\partial\Omega$. The following second-order linear elliptic boundary value problem with a Robin boundary condition is considered:
\begin{subequations}\label{eq:fullBVP}
\begin{align}
\LL u:=-\nabla\cdot\bigl(A(\x)\nabla u(\x)\bigr)
+\widehat{\bb}(\x)\cdot\nabla u(\x)+c(\x)u(\x)&=f(\x),
&&\x\in\Omega,\label{eq:main}\\
\Bn u:=\bigl(A(\x)\nabla u(\x)\bigr)\cdot \nn(\x)+\beta(\x)u(\x)&=h(\x),
&&\x\in\partial\Omega.\label{eq:robin}
\end{align}
\end{subequations}
Here, $A:\Omega\to\RR^{n\times n}$ is symmetric and uniformly positive definite, $\widehat{\bb}:\Omega\to\RR^n$ is a prescribed vector-valued coefficient, and $c,f:\Omega\to\RR$ are scalar-valued functions. The functions $\beta,h:\partial\Omega\to\RR$ are prescribed boundary data, and $\nn$ denotes the outward unit normal on $\partial\Omega$. Sufficient regularity is assumed whenever pointwise differential identities are used, and the boundary value problem is assumed to be uniquely solvable when symmetry inheritance is invoked.

Problems of the form \eqref{eq:fullBVP} arise in diffusion, heat transfer, mass transport, and wave propagation. Robin boundary conditions model partial exchange or impedance-type interactions and also occur in scalar formulations of computational electromagnetics involving impedance or absorbing boundaries.

Only orthogonal spatial symmetries are needed below. After choosing the origin at a common fixed point of the bounded symmetric domain, define
\begin{equation}\label{eq:GOmega}
G_\Omega:=\{g\in O(n):g\Omega=\Omega\},
\end{equation}
where $O(n)$ denotes the group of $n\times n$ orthogonal matrices. In fact, $G_\Omega$ is called the isometry group of the bounded domain $\Omega$.

For a scalar field $v$, a polar vector field $\boldsymbol v$, and a second-order tensor field $M$, respectively, define
\begin{align}
(T_g^{(0)}v)(\x)&:=v(g^{-1}\x),\label{eq:leftscalar}\\
(T_g^{(1)}\boldsymbol v)(\x)&:=g\,\boldsymbol v(g^{-1}\x),\label{eq:leftvector}\\
(T_g^{(2)}M)(\x)&:=g\,M(g^{-1}\x)g^{-1}.\label{eq:lefttensor}
\end{align}
The same notation $T_g^{(0)}$ is used for scalar functions on $\partial\Omega$, since $g\partial\Omega=\partial\Omega$ for every $g\in G_\Omega$. These definitions give left actions. For example,
\begin{align*}
(T_{g_1}^{(0)}T_{g_2}^{(0)}v)(\x)
&=v(g_2^{-1}g_1^{-1}\x)
=v((g_1g_2)^{-1}\x)
=(T_{g_1g_2}^{(0)}v)(\x),
\end{align*}
and the same composition law holds for $T_g^{(1)}$ and $T_g^{(2)}$.

The relations between these actions and the differential operators are needed before any symmetry group is introduced. Let $\y=g^{-1}\x$ and $g=(g_{ij})_{n\times n}\in O(n)$. Since $g^{-1}=g^T$, one has $y_j=\sum_{k=1}^n g_{kj}x_k$ and therefore $\partial y_j/\partial x_i=g_{ij}$. For a differentiable scalar function $v$,
\begin{align}
\frac{\partial}{\partial x_i}(T_g^{(0)}v)(\x)
&=\frac{\partial}{\partial x_i}v(\y)
=\sum_{j=1}^n\frac{\partial v}{\partial y_j}(\y)
\frac{\partial y_j}{\partial x_i}=\sum_{j=1}^n g_{ij}\frac{\partial v}{\partial y_j}(\y).
\end{align}
Hence
\begin{equation}\label{eq:gradienttransform}
\nabla T_g^{(0)}v=T_g^{(1)}\nabla v.
\end{equation}
Thus the gradient of a scalar field transforms as a polar vector field.

Let $\boldsymbol F=(F_1,\ldots,F_n)^T$ be differentiable. Using \eqref{eq:leftvector} and the chain rule gives
\begin{align}
\nabla\cdot(T_g^{(1)}\boldsymbol F)(\x)
&=\sum_{i=1}^n\frac{\partial}{\partial x_i}
\left(\sum_{j=1}^n g_{ij}F_j(\y)\right)=\sum_{i,j,k}g_{ij}(g^{-1})_{ki}
\frac{\partial F_j}{\partial y_k}(\y)\nonumber\\
&=\sum_{j,k}\delta_{jk}\frac{\partial F_j}{\partial y_k}(\y)
=(\nabla\cdot\boldsymbol F)(g^{-1}\x).
\end{align}
Therefore
\begin{equation}\label{eq:divtransform}
\nabla\cdot T_g^{(1)}\boldsymbol F
=T_g^{(0)}(\nabla\cdot\boldsymbol F).
\end{equation}

The tensor and vector actions are compatible with matrix--vector multiplication:
\begin{align}
(T_g^{(2)}A)(T_g^{(1)}\boldsymbol v)
&=gA(g^{-1}\x)g^{-1}g\boldsymbol v(g^{-1}\x)\nonumber\\
&=g[A(g^{-1}\x)\boldsymbol v(g^{-1}\x)]
=T_g^{(1)}(A\boldsymbol v).
\end{align}
Consequently,
\begin{equation}\label{eq:tensormult}
(T_g^{(2)}A)(T_g^{(1)}\boldsymbol v)=T_g^{(1)}(A\boldsymbol v).
\end{equation}
Because $g$ is orthogonal, the Euclidean inner product is preserved, so
\begin{equation}\label{eq:dottransform}
(T_g^{(1)}\boldsymbol p)\cdot(T_g^{(1)}\boldsymbol q)
=T_g^{(0)}(\boldsymbol p\cdot\boldsymbol q).
\end{equation}
Combining \eqref{eq:gradienttransform}, \eqref{eq:divtransform}, and \eqref{eq:tensormult} gives
\begin{equation}\label{eq:principaltransform}
T_g^{(0)}\!\left[\nabla\cdot(A\nabla u)\right]
=\nabla\cdot\left[(T_g^{(2)}A)\nabla(T_g^{(0)}u)\right].
\end{equation}
Similarly, from \eqref{eq:dottransform} and \eqref{eq:gradienttransform}, then
\begin{equation}\label{eq:firstordertransform}
T_g^{(0)}\!\left(\widehat{\bb}\cdot\nabla u\right)=(T_g^{(1)}\widehat{\bb})\cdot (T_g^{(1)}\nabla u)
=(T_g^{(1)}\widehat{\bb})\cdot\nabla(T_g^{(0)}u),
\end{equation}
and
\begin{equation}\label{eq:zerothtransform}
T_g^{(0)}(cu)=(T_g^{(0)}c)(T_g^{(0)}u).
\end{equation}
are valid. Thus, application of $T_g^{(0)}$ to both sides of the governing equation \eqref{eq:main} yields
\begin{align}
-\nabla\cdot\left[(T_g^{(2)}A)\nabla(T_g^{(0)}u)\right]
+(T_g^{(1)}\widehat{\bb})\cdot\nabla(T_g^{(0)}u)+(T_g^{(0)}c)(T_g^{(0)}u)=T_g^{(0)}f.
\label{eq:transformedPDE}
\end{align}

The Robin boundary equation transforms under the same action. Since $g\Omega=\Omega$ and $g$ is orthogonal, the tangent space at $\x\in\partial\Omega$ is mapped onto the tangent space at $g\x$, and the outward orientation is preserved. Hence
\begin{equation}\label{eq:normaltransform}
\nn(g\x)=g\nn(\x),\qquad \x\in\partial\Omega.
\end{equation}
Equivalently,
\begin{equation}\label{eq:normalfixed}
T_g^{(1)}\nn=\nn\qquad\text{on }\partial\Omega.
\end{equation}
The conormal term is a scalar boundary function. Applying $T_g^{(0)}$ directly and then using \eqref{eq:dottransform}, \eqref{eq:tensormult}, and \eqref{eq:gradienttransform} gives
\begin{align}
T_g^{(0)}\left[(A\nabla u)\cdot\nn\right]
&=T_g^{(1)}(A\nabla u)\cdot T_g^{(1)}\nn\nonumber\\
&=\left[(T_g^{(2)}A)T_g^{(1)}(\nabla u)\right]\cdot T_g^{(1)}\nn\nonumber\\
&=\left[(T_g^{(2)}A)\nabla(T_g^{(0)}u)\right]\cdot\nn.
\label{eq:conormaltransform}
\end{align}
For completeness, the first equality in \eqref{eq:conormaltransform} follows pointwise from
\begin{align}
&T_g^{(0)}\left[(A\nabla u)\cdot\nn\right](\x)=A(g^{-1}\x)\nabla u(g^{-1}\x)\cdot\nn(g^{-1}\x)\nonumber\\
&=\bigl[gA(g^{-1}\x)\nabla u(g^{-1}\x)\bigr]\cdot\bigl[g\nn(g^{-1}\x)\bigr]=\bigl[gA(g^{-1}\x)g^{-1}][g\nabla u(g^{-1}\x)\bigr]\cdot\bigl[g\nn(g^{-1}\x)\bigr],
\end{align}
where orthogonality of $g$ is used in the last step. The remaining Robin term satisfies
\begin{equation}\label{eq:betatransform}
T_g^{(0)}(\beta u)=(T_g^{(0)}\beta)(T_g^{(0)}u)
\qquad\text{on }\partial\Omega.
\end{equation}
Therefore application of $T_g^{(0)}$ to both sides of \eqref{eq:robin} gives
\begin{equation}\label{eq:transformedRobin}
\left[(T_g^{(2)}A)\nabla(T_g^{(0)}u)\right]\cdot\nn
+(T_g^{(0)}\beta)(T_g^{(0)}u)=T_g^{(0)}h
\qquad\text{on }\partial\Omega.
\end{equation}
Equations \eqref{eq:transformedPDE} and \eqref{eq:transformedRobin} show explicitly how the same left group action acts on both equations of the complete Robin boundary value problem.

\section{Symmetry Groups and Symmetry Inheritance}\label{sec:groups}
The preceding transformation formulas indicate which data must remain unchanged for the transformed problem to coincide with the original one. This provides a natural route to the required symmetry groups.

For the principal matrix-valued coefficient, define
\begin{equation}\label{eq:SymA}
\Sym(A):=\{g\in G_\Omega:T_g^{(2)}A=A\}.
\end{equation}
Equivalently, $g\in\Sym(A)$ if and only if
\begin{equation}\label{eq:SymApoint}
A(g\x)=gA(\x)g^{-1}=gA(\x)g^T,
\qquad \x\in\Omega.
\end{equation}
For the first-order vector coefficient, define
\begin{equation}\label{eq:Symb}
\Sym(\widehat{\bb}):=\{g\in G_\Omega:T_g^{(1)}\widehat{\bb}=\widehat{\bb}\},
\end{equation}
which is equivalent to
\begin{equation}\label{eq:Symbpoint}
\widehat{\bb}(g\x)=g\widehat{\bb}(\x),
\qquad \x\in\Omega.
\end{equation}
Since $c$ is scalar, its symmetry group is
\begin{equation}\label{eq:Symc}
\Sym(c):=\{g\in G_\Omega:T_g^{(0)}c=c\}
=\{g\in G_\Omega:c(g\x)=c(\x),~\forall \x\in{\Omega}\}.
\end{equation}
These definitions are the same coefficient-function symmetry groups used in \cite{PanJiang2026}, now expressed directly in terms of the left actions \eqref{eq:leftscalar}--\eqref{eq:lefttensor}.

The differential operator \(\LL\) contains precisely the three coefficients $A$, $\widehat{\bb}$, and $c$. Its symmetry group is therefore defined by their common stabilizer,
\begin{equation}\label{eq:SymL}
\Sym(\LL):=\Sym(A)\cap\Sym(\widehat{\bb})\cap\Sym(c).
\end{equation}
This definition is consistent with the operator transformation law. Indeed, if $g\in\Sym(\LL)$, then $T_g^{(2)}A=A$, $T_g^{(1)}\widehat{\bb}=\widehat{\bb}$, and $T_g^{(0)}c=c$. When $g\in{\Sym(\LL)}$, equation \eqref{eq:transformedPDE} then reduces to
\begin{equation}\label{eq:Lintertwine}
\LL(T_g^{(0)}u)=T_g^{(0)}(\LL u).
\end{equation}
Thus $\Sym(\LL)$ is a group of transformations under which the differential operator intertwines with the left action on scalar functions.

The Robin coefficient $\beta$ is a scalar function defined on $\partial\Omega$. Since $g$ maps $\partial\Omega$ onto itself, its symmetry group is naturally defined by
\begin{equation}\label{eq:Symbeta}
\Sym(\beta):=\{g\in G_\Omega:T_g^{(0)}\beta=\beta\text{ on }\partial\Omega\}.
\end{equation}
Equivalently, $\beta(g\x)=\beta(\x)$ for every $\x\in\partial\Omega$. The Robin boundary operator depends on $A$ through the conormal flux and on $\beta$ through the zeroth-order boundary term. Accordingly, define
\begin{equation}\label{eq:SymB}
\Sym(\Bn):=\Sym(A)\cap\Sym(\beta).
\end{equation}
This definition has the required operator meaning. If $g\in\Sym(\Bn)$, then \eqref{eq:conormaltransform} and \eqref{eq:betatransform} give
\begin{align}
T_g^{(0)}(\Bn v)
&=T_g^{(0)}\left[(A\nabla v)\cdot\nn+\beta v\right]\nonumber\\
&=\left[A\nabla(T_g^{(0)}v)\right]\cdot\nn
+\beta T_g^{(0)}v\nonumber\\
&=\Bn(T_g^{(0)}v),
\qquad \text{on }\partial\Omega.
\label{eq:Bintertwine}
\end{align}
Hence membership in $\Sym(A)\cap\Sym(\beta)$ guarantees covariance of the complete Robin boundary operator. The normal field does not require an additional symmetry group because its transformation law \eqref{eq:normaltransform} follows from $g\in G_\Omega$ and the geometry of the boundary.

The two source terms are treated by the same scalar action. For the interior source,
\begin{equation}\label{eq:Symf}
\Sym(f):=\{g\in G_\Omega:T_g^{(0)}f=f\text{ in }\Omega\},
\end{equation}
and for the boundary source,
\begin{equation}\label{eq:Symh}
\Sym(h):=\{g\in G_\Omega:T_g^{(0)}h=h\text{ on }\partial\Omega\}.
\end{equation}
All the sets in \eqref{eq:SymA}, \eqref{eq:Symb}, \eqref{eq:Symc}, \eqref{eq:Symbeta}, \eqref{eq:Symf}, and \eqref{eq:Symh} are stabilizers of left group actions and are therefore subgroups of $G_\Omega$. Their intersections, including \eqref{eq:SymL} and \eqref{eq:SymB}, are also subgroups.

The complete boundary value problem \eqref{eq:fullBVP} is unchanged only when every coefficient and both source terms are simultaneously invariant. This leads to the common symmetry group
\begin{align}
G_{\rm com}
&:=\Sym(A)\cap\Sym(\widehat{\bb})\cap\Sym(c)\cap\Sym(f)
\cap\Sym(\beta)\cap\Sym(h)\label{eq:Gcom}\\
&=\Sym(\LL)\cap\Sym(f)\cap\Sym(\Bn)\cap\Sym(h).\nonumber
\end{align}
The first line emphasizes the symmetry of all data $A,\widehat{\bb},c,f,\beta,h$, whereas the second line emphasizes the decomposition into the interior equation and the Robin boundary equation.

The symmetry group of the unique solution $u$ to equation \eqref{eq:fullBVP} is defined by
\begin{equation}\label{eq:Symu}
\Sym(u):=\{g\in G_\Omega:T_g^{(0)}u=u\}.
\end{equation}
The relation between $G_{\rm com}$ and $\Sym(u)$ is the central consequence of the preceding definitions.

\begin{theorem}[Symmetry inheritance]\label{thm:solution_symmetry}
Assume that the Robin boundary value problem \eqref{eq:fullBVP} has a unique solution $u$. Then
\begin{equation}\label{eq:maininclusion}
G_{\rm com}\subseteq\Sym(u).
\end{equation}
\end{theorem}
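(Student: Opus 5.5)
The plan is the standard uniqueness argument: show that for every $g\in G_{\rm com}$ the transformed function $w:=T_g^{(0)}u$ solves the same boundary value problem \eqref{eq:fullBVP}, and then conclude $w=u$ from the assumed uniqueness. No estimates are needed. Everything follows from the transformation laws of Section~\ref{sec:BVPaction} and the intertwining identities \eqref{eq:Lintertwine} and \eqref{eq:Bintertwine}.

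First, fix $g\in G_{\rm com}$. By \eqref{eq:Gcom}, $g$ lies in $\Sym(\LL)$, $\Sym(\Bn)$, $\Sym(f)$ and $\Sym(h)$, and in particular $g\in G_\Omega$. The set memberships give three facts:
\begin{itemize}
\item $g\Omega=\Omega$ and $g\partial\Omega=\partial\Omega$, so $w(\x)=u(g^{-1}\x)$ is defined on $\Omega$ and has a trace on $\partial\Omega$;
\item because $g$ is a linear isometry, $w$ has the same regularity class as $u$ (whether classical or $H^1(\Omega)$);
\item $w$ is therefore an admissible candidate for the problem.
\end{itemize}

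Next, apply $T_g^{(0)}$ to \eqref{eq:main}. Using \eqref{eq:Lintertwine}, valid since $g\in\Sym(\LL)$, together with $T_g^{(0)}f=f$, we get
\[
\LL w=T_g^{(0)}(\LL u)=T_g^{(0)}f=f\quad\text{in }\Omega .
\]
Likewise, applying $T_g^{(0)}$ to \eqref{eq:robin} and using \eqref{eq:Bintertwine}, valid since $g\in\Sym(\Bn)$, together with $T_g^{(0)}h=h$, we get
\[
\Bn w=T_g^{(0)}(\Bn u)=h\quad\text{on }\partial\Omega .
\]
Thus $w$ solves \eqref{eq:fullBVP}. Uniqueness then gives $w=u$, that is $T_g^{(0)}u=u$, so $g\in\Sym(u)$. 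Since $g\in G_{\rm com}$ was arbitrary, \eqref{eq:maininclusion} follows.

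The only delicate step is justifying the boundary identity \eqref{eq:Bintertwine} when $u$ is merely a weak solution and the conormal derivative is not defined pointwise. If pointwise regularity is not assumed, I would instead verify the statement at the level of the variational formulation:
\[
a(u,v)=\int_\Omega A\nabla u\cdot\nabla v+(\widehat{\bb}\cdot\nabla u)v+cuv\,d\x+\int_{\partial\Omega}\beta uv\,ds ,
\]
with right-hand side $\int_\Omega fv\,d\x+\int_{\partial\Omega}hv\,ds$. I would then check that $a(T_g^{(0)}u,T_g^{(0)}v)=a(u,v)$ and that the right-hand side is similarly invariant. The check uses the change of variables $\x\mapsto g\x$, which has unit Jacobian and preserves the surface measure on $\partial\Omega$ because $g$ is orthogonal, together with \eqref{eq:gradienttransform}, \eqref{eq:tensormult}, \eqref{eq:dottransform}, and the invariances of $A$, $\widehat{\bb}$, $c$, $\beta$, $f$, $h$. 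Testing with $v=T_g^{(0)}\varphi$ then shows that $T_g^{(0)}u$ is a weak solution, and weak uniqueness finishes the argument.
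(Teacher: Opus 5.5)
Your proposal is correct and matches the paper's proof: for $g\in G_{\rm com}$ you use the intertwining identities \eqref{eq:Lintertwine} and \eqref{eq:Bintertwine}, together with $T_g^{(0)}f=f$ and $T_g^{(0)}h=h$, to show that $T_g^{(0)}u$ solves the same Robin problem, and uniqueness then gives $T_g^{(0)}u=u$. Your closing variational check, showing that $a(\cdot,\cdot)$ and the right-hand side are invariant under $T_g^{(0)}$, is a sound addition the paper does not make, since the paper simply assumes enough regularity for the pointwise identities.
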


\begin{proof}
Let $g\in G_{\rm com}$ be arbitrary and set $u_g=T_g^{(0)}u$. By the definition of $G_{\rm com}$, one has $g\in\Sym(\LL)$ and $g\in\Sym(f)$. Hence the operator relation \eqref{eq:Lintertwine} gives
\begin{align}
\LL u_g
&=\LL(T_g^{(0)}u)
=T_g^{(0)}(\LL u)
=T_g^{(0)}f
=f
\qquad\text{in }\Omega.
\label{eq:inheritinterior}
\end{align}
Thus the transformed function satisfies the same governing differential equation as $u$.

Similarly, one has $g\in\Sym(\Bn)$ and $g\in\Sym(h)$. Using \eqref{eq:Bintertwine},
\begin{align}
\Bn u_g
&=\Bn(T_g^{(0)}u)
=T_g^{(0)}(\Bn u)
=T_g^{(0)}h
=h
\qquad\text{on }\partial\Omega.
\label{eq:inheritboundary}
\end{align}
Therefore $u_g$ satisfies the same Robin boundary condition as $u$. Combining \eqref{eq:inheritinterior} and \eqref{eq:inheritboundary},
\begin{equation}\label{eq:sameproblem}
\begin{cases}
\LL u_g=f,&\text{in }\Omega,\\
\Bn u_g=h,&\text{on }\partial\Omega.
\end{cases}
\end{equation}
Hence $u_g$ and $u$ solve exactly the same boundary value problem. Uniqueness implies $u_g=u$, or equivalently $T_g^{(0)}u=u$. Thus $g\in\Sym(u)$. Since $g$ is arbitrary, \eqref{eq:maininclusion} follows.
\end{proof}

Note that the inclusion in Theorem~\ref{thm:solution_symmetry} may be strict. Consider the following Robin boundary problem:
\begin{equation}
\begin{cases}
-\nabla\cdot(A\nabla u)+u=1, & \text{in }\Omega,\\
(A\nabla u)\cdot\nn+u=1, & \text{on }\partial\Omega,
\end{cases}
\qquad
\Omega=(-1,1)^2,\qquad
A=\begin{pmatrix}2&0\\0&1\end{pmatrix}.
\end{equation}
Here $\widehat{\bb}={\bf 0}$ and $c=\beta=f=h=1$. The orthogonal symmetry group of the square is the dihedral group $D_4$, consisting of its four rotations and four reflections. Let
\[
Q_x=\begin{pmatrix}-1&0\\0&1\end{pmatrix},
\qquad
Q_y=\begin{pmatrix}1&0\\0&-1\end{pmatrix},
\]
which represent the reflections $(x,y)\mapsto(-x,y)$ and $(x,y)\mapsto(x,-y)$, respectively.

Since the two eigenvalues of $A$ are distinct, only the symmetries preserving the two coordinate axes leave $A$ invariant. Hence
\[
\Sym(A)=\Sym(\LL)=\Sym(\Bn)=\{I_2,Q_x,Q_y,-I_2\},
\]
whereas
\[
\Sym(\widehat{\bb})=\Sym(c)=\Sym(\beta)=\Sym(f)=\Sym(h)=D_4.
\]
Therefore
\[
G_{\rm com}=\{I_2,Q_x,Q_y,-I_2\}.
\]

The problem is uniquely solvable by coercivity, and its exact solution is $u\equiv1$. Since a constant function is invariant under every element of $D_4$,
\[
\Sym(u)=D_4,
\]
and consequently
\[
G_{\rm com}=\{I_2,Q_x,Q_y,-I_2\}\subsetneq D_4=\Sym(u).
\]
Thus, the common symmetry group gives the symmetries necessarily inherited from the problem data, while a particular solution may possess additional symmetries.

\begin{remark}
In general, $G_{\rm com}\subseteq\Sym(u)$ need not be an equality. Additional symmetries may arise from the particular structure of the solution.
\end{remark}

\section{Domain-Reduced Finite Element Methdod}\label{sec:reduction}
Assume that $Q\in G_{\rm com}$ is the reflection with respect to the hyperplane
\begin{equation}\label{eq:reflectionplane}
\Pi=\{\x\in\RR^n:\boldsymbol n_S\cdot\x=0\},
\end{equation}
where $\boldsymbol n_S$ is a unit normal and
\begin{equation}\label{eq:reflection}
Q=I_n-2\boldsymbol n_S\boldsymbol n_S^T.
\end{equation}
Theorem~\ref{thm:solution_symmetry} gives $u(Q\x)=u(\x)$. At a point $\x\in\Pi\cap\Omega$, the identity $Q\x=\x$ and the gradient transformation law imply
\begin{equation}\label{eq:gradientfixedplane}
Q\nabla u(\x)=\nabla u(\x).
\end{equation}
Thus $\nabla u$ is tangent to the reflection plane and $\nabla u\cdot\boldsymbol n_S=0$ there. For an anisotropic coefficient, the exact artificial boundary condition concerns the conormal flux. Since $Q\in\Sym(A)$ and $\Sym(A)$ is a group, then $Q^{-1}\in{\Sym(A)}$. This implies 
\begin{equation}\label{eq:Acommute}
Q^{-1}A(Q\x)Q=A(\x),~\forall\x\in{\Omega}
\end{equation}
Particularly, when $\x\in{\Pi}$, it follows that $Q\x=\x$ on $\Pi$, then
yields $QA=AQ$ on the fixed plane. Applying $Q$ to the flux and using \eqref{eq:gradientfixedplane},
\begin{equation}
Q(A\nabla u)=A(Q\nabla u)=A\nabla u,~\x\in{\Pi}
\end{equation}
Hence $A\nabla u$ is also tangent to $\Pi$, and therefore
\begin{equation}\label{eq:generalized_neumann}
(A\nabla u)\cdot\boldsymbol n_S=0
\qquad\text{on }\Pi\cap\Omega.
\end{equation}
The derivation is identical in structure to the Dirichlet case in \cite{PanJiang2026}: the artificial condition is determined by the reflection symmetry of the solution and the tensor symmetry of $A$, not by the type of boundary condition imposed on the original exterior boundary.

Suppose that reflections contained in $G_{\rm com}$ partition $\Omega$ into congruent fundamental subdomains, and let $\Omega_1$ denote one of them. Write
\begin{equation}\label{eq:boundarysplit}
\partial\Omega_1=\Gamma_R\cup\Gamma_S,
\end{equation}
where $\Gamma_R=\partial\Omega_1\cap\partial\Omega$ is inherited from the original Robin boundary and $\Gamma_S$ consists of artificial symmetry cuts. The reduced problem is
\begin{subequations}\label{eq:reduced}
\begin{align}
-\nabla\cdot(A\nabla u_1)+\widehat{\bb}\cdot\nabla u_1+cu_1&=f,
&&\text{in }\Omega_1,\label{eq:reduceda}\\
(A\nabla u_1)\cdot\nn+\beta u_1&=h,
&&\text{on }\Gamma_R,\label{eq:reducedb}\\
(A\nabla u_1)\cdot\nn_S&=0,
&&\text{on }\Gamma_S.\label{eq:reducedc}
\end{align}
\end{subequations}
Here $\nn$ is the outward unit normal of the original Robin boundary $\Gamma_R$, whereas $\nn_S$ is the outward unit normal of artificial symmetry boundaries $\Gamma_S$. The restriction $u_1=u|_{\Omega_1}$ satisfies \eqref{eq:reduced}, and the full-domain solution is reconstructed by the corresponding symmetry transformations.

Let $V=H^1(\Omega_1)$. Multiplying \eqref{eq:reduceda} by a test function $v\in V$, integrating the principal term by parts, using the Robin condition on $\Gamma_R$, and using the homogeneous generalized Neumann condition on $\Gamma_S$, gives
\begin{equation}\label{eq:reducedweak}
a_1(u_1,v)=F_1(v)\qquad\forall v\in V,
\end{equation}
where
\begin{equation}\label{eq:a1}
\begin{split}
a_1(w,v)=&\int_{\Omega_1}(A\nabla w)\cdot\nabla v\,d\x
+\int_{\Omega_1}(\widehat{\bb}\cdot\nabla w)v\,d\x+\int_{\Omega_1}cwv\,d\x
+\int_{\Gamma_R}\beta wv\,ds,
\end{split}
\end{equation}
and
\begin{equation}\label{eq:F1}
F_1(v)=\int_{\Omega_1}fv\,d\x+\int_{\Gamma_R}hv\,ds.
\end{equation}
No boundary integral remains on $\Gamma_S$ because its conormal flux is zero.

Let $\mathcal T_h$ be a conforming simplicial mesh of $\Omega_1$, and let
\begin{equation}\label{eq:Vh}
V_h=\{v_h\in C^0(\overline{\Omega}_1):v_h|_K\text{ is affine for every }K\in\mathcal T_h\}.
\end{equation}
The linear finite element approximation seeks $u_h\in V_h$ such that
\begin{equation}\label{eq:fem}
a_1(u_h,v_h)=F_1(v_h)\qquad\forall v_h\in V_h.
\end{equation}
With the nodal basis $\{\varphi_j\}_{j=1}^{N_h}$ and $u_h=\sum_jU_j\varphi_j$, the algebraic problem is
\begin{equation}\label{eq:system}
KU=F,
\end{equation}
with
\begin{equation}\label{eq:matrix}
\begin{split}
K_{ij}={}&\int_{\Omega_1}(A\nabla\varphi_j)\cdot\nabla\varphi_i\,d\x
+\int_{\Omega_1}(\widehat{\bb}\cdot\nabla\varphi_j)\varphi_i\,d\x\\
&+\int_{\Omega_1}c\varphi_j\varphi_i\,d\x
+\int_{\Gamma_R}\beta\varphi_j\varphi_i\,ds,
\end{split}
\end{equation}
and
\begin{equation}\label{eq:rhs}
F_i=\int_{\Omega_1}f\varphi_i\,d\x+\int_{\Gamma_R}h\varphi_i\,ds.
\end{equation}
Thus the Robin coefficient contributes a boundary matrix and the boundary source contributes a boundary load, whereas the artificial symmetry boundary requires no additional term.

\section{Examples}\label{sec:examples}
 Three examples are presented to illustrate dimensional reduction, analytical verification of the symmetry theorem, and numerical validation of the reduced-domain formulation.
\paragraph{Example 1: exact reduction from $n$ dimensions to one dimension.}\leavevmode\par
\noindent Let
\begin{equation}
\Omega=\{\x\in\RR^n:R_1<\|\x\|<R_2\}
\end{equation}
be a spherical annulus, and consider
\begin{subequations}\label{eq:radial_general}
\begin{align}
-\nabla\cdot\!\left(\varepsilon(\|\x\|)\nabla u\right)
+c(\|\x\|)u
&=f(\|\x\|),
&&\x\in\Omega, \label{eq:radial_general_a}\\
\varepsilon(\|\x\|)\nabla u\cdot\nn
+\beta(\|\x\|)u
&=h(\|\x\|),
&&\x\in\partial\Omega. \label{eq:radial_general_b}
\end{align}
\end{subequations}
Assume that the problem is uniquely solvable and that
$\varepsilon(\|\x\|)>0$. Since the domain, coefficients, and source
data are invariant under every orthogonal transformation, then $G_{\rm com}=O(n)$ holds. Theorem~\ref{thm:solution_symmetry} therefore gives $O(n)\subseteq\Sym(u)$.
Hence $u$ is constant on every orbit of the action of $O(n)$, i.e.,
on every sphere centered at the origin. Consequently,
\begin{equation}
u(\x)=\phi(r),\qquad r=\|\x\|.
\end{equation}
Thus the quotient of the spherical annulus by the $O(n)$ action is
the interval $(R_1,R_2)$, and the original $n$-dimensional problem is
reduced exactly to
\begin{subequations}\label{eq:radial_ode}
\begin{align}
-\frac{1}{r^{n-1}}\frac{d}{dr}
\left(r^{n-1}\varepsilon(r)\phi'(r)\right)
+c(r)\phi(r)
&=f(r),
&&R_1<r<R_2, \label{eq:radial_ode_a}\\
-\varepsilon(R_1)\phi'(R_1)+\beta_1\phi(R_1)
&=h_1, \label{eq:radial_ode_b}\\
\varepsilon(R_2)\phi'(R_2)+\beta_2\phi(R_2)
&=h_2, \label{eq:radial_ode_c}
\end{align}
\end{subequations}
where $\beta_i=\beta(R_i)$ and $h_i=h(R_i)$ for $i=1,2$.

To illustrate the reduction independently, consider the
three-dimensional case $n=3$ with $R_1=1$, $R_2=2$,
$\varepsilon\equiv1$, $c\equiv1$, and $\beta\equiv2$:
\begin{subequations}\label{eq:radial_3d}
\begin{align}
-\Delta u+u
&=\|\x\|^2-5,
&&1<\|\x\|<2, \label{eq:radial_3d_a}\\
\frac{\partial u}{\partial n}+2u
&=
\begin{cases}
2, & \|\x\|=1,\\
14, & \|\x\|=2.
\end{cases}
\label{eq:radial_3d_b}
\end{align}
\end{subequations}
A direct substitution shows that
\begin{equation}
u_{\rm ex}(\x)=1+\|\x\|^2
\end{equation}
is the exact solution. 

All coefficients and source data of this three-dimensional problem are
invariant under $O(3)$, so that $G_{\rm com}=O(3)$. Independently, the explicit solution depends only on $\|\x\|$, and hence $\Sym(u_{\rm ex})=O(3)$. Therefore $G_{\rm com}=\Sym(u_{\rm ex})=O(3)$, which is consistent with Theorem~\ref{thm:solution_symmetry}.

After reduction, the exact radial function $\phi(r)=1+r^2$ 
satisfies
\begin{subequations}
\begin{align}
-\frac{1}{r^2}\frac{d}{dr}
\left(r^2\phi'(r)\right)+\phi(r)
&=r^2-5,
&&1<r<2,\\
-\phi'(1)+2\phi(1)&=2,\\
\phi'(2)+2\phi(2)&=14.
\end{align}
\end{subequations}
Hence both the original three-dimensional problem and the reduced
one-dimensional problem have the same analytically known solution under
the radial identification $u(\x)=\phi(\|\x\|)$. This example illustrates
that the symmetry theorem can lead to an exact reduction of spatial
dimension, rather than merely a reduction to a smaller subdomain.

\paragraph{Example 2: a manufactured reflection-symmetric Robin problem.}\leavevmode\par
\noindent Let $\Omega=(-1,1)^2$, $A=I_2$, $\widehat{\bb}={\bf 0}$, $c=1$, and $\beta=2$. Set $f(x,y)=(5\pi^2+1)\cos(\pi x)\cos(2\pi y)$ and
\begin{equation*}
h(x,y)=
\begin{cases}
2\cos(\pi x), & y=\pm1,\\
-2\cos(2\pi y), & x=\pm1.
\end{cases}
\end{equation*}
The exact solution is
\begin{equation}\label{eq:example2_exact}
u_{\rm ex}(x,y)=\cos(\pi x)\cos(2\pi y).
\end{equation}

Let $D_4$ denote the orthogonal symmetry group of the square, and define
\[
I_{2}=
\begin{pmatrix}
1&0\\
0&1
\end{pmatrix},\qquad
Q_x=
\begin{pmatrix}
-1&0\\
0&1
\end{pmatrix},
\qquad
Q_y=
\begin{pmatrix}
1&0\\
0&-1
\end{pmatrix},
\qquad
Q_xQ_{y}=
\begin{pmatrix}
-1&0\\
0&-1
\end{pmatrix},
\]
Since $A=I_2$, $\widehat{\bb}={\bf 0}$, $c=1$, and $\beta=2$,
\begin{equation}
\Sym(A)=\Sym(\widehat{\bb})=\Sym(c)=\Sym(\beta)=D_4,
\qquad
\Sym(\LL)=\Sym(\Bn)=D_4.
\end{equation}
The volume and boundary sources possess only the two coordinate-reflection symmetries and their product:
\begin{equation}
\Sym(f)=\Sym(h)=\{I_2,Q_x,Q_y,Q_xQ_y\}.
\end{equation}
Consequently,
\begin{equation}\label{commonu}
G_{\rm com}
=
\{I_2,Q_x,Q_y,Q_xQ_y\}.
\end{equation}
From \eqref{eq:example2_exact} and \eqref{commonu},
\begin{equation}
\Sym(u_{\rm ex})
=
\{I_2,Q_x,Q_y,Q_xQ_y\}
=
G_{\rm com},
\end{equation}
which directly verifies Theorem~\ref{thm:solution_symmetry} in this example.

The two reflections reduce the full square to the quarter-domain $\Omega_{++}=(0,1)^2$. The physical edges retain the original Robin condition, whereas the artificial symmetry boundaries satisfy
\begin{equation}
\frac{\partial u}{\partial x}=0
\quad\text{on }x=0,
\qquad
\frac{\partial u}{\partial y}=0
\quad\text{on }y=0.
\end{equation}

\paragraph{Example 3: numerical test with anisotropic variable coefficients and nonzero data.}\leavevmode\par
\noindent Let $\Omega=(-1,1)^2$ and consider
\begin{subequations}\label{eq:variable_num}
\begin{align}
-\nabla\cdot(A\nabla u)
+\widehat{\bb}\cdot\nabla u
+cu
&=f,
&&\text{in }\Omega,\\
(A\nabla u)\cdot\nn+\beta u
&=h,
&&\text{on }\partial\Omega.
\end{align}
\end{subequations}
The coefficients are
\begin{equation*}
A(x,y)=
\begin{pmatrix}
2+x^2 & xy\\
xy & 3+y^2
\end{pmatrix},
\qquad
\widehat{\bb}(x,y)=
\begin{pmatrix}
x(1+y^2)\\
y(1+x^2)
\end{pmatrix},
\end{equation*}
and
\begin{equation*}
c(x,y)=10+x^2+y^2,
\qquad
\beta(x,y)=2+x^2+y^2.
\end{equation*}
Choose the nonzero source data
\begin{equation*}
f(x,y)=2+x^2+y^2+\cos(\pi x)\cos(\pi y),
\qquad
h(x,y)=1+x^2+y^2.
\end{equation*}
No exact solution is prescribed.

The tensor $A$ is symmetric and uniformly positive definite, since
\begin{equation*}
\det A
=
(2+x^2)(3+y^2)-x^2y^2
=
6+3x^2+2y^2
>0.
\end{equation*}

Let
\[
Q_x=
\begin{pmatrix}
-1&0\\
0&1
\end{pmatrix},
\qquad
Q_y=
\begin{pmatrix}
1&0\\
0&-1
\end{pmatrix},
\]
and set
\begin{equation}
K=\{I_2,Q_x,Q_y, Q_xQ_y\}.
\end{equation}
The square has orthogonal symmetry group $D_4$. From the transformation laws, $\Sym(A)=K$, whereas
\begin{equation*}
\Sym(\widehat{\bb})=\Sym(c)=
\Sym(\beta)=\Sym(f)=\Sym(h)=D_4.
\end{equation*}
Consequently,
\begin{equation*}
\Sym(\LL)=\Sym(\Bn)=G_{\rm com}=K.
\end{equation*}
Theorem~\ref{thm:solution_symmetry} therefore predicts reflection symmetry with respect to both coordinate axes.

The full problem can be reduced to the quarter-domain $\Omega_{++}=(0,1)^2$. The original Robin condition is imposed on the inherited physical boundaries $x=1$ and $y=1$, while the artificial symmetry boundaries satisfy
\begin{equation*}
(A\nabla u)\cdot\nn_S=0
\qquad
\text{on }x=0\ \text{and}\ y=0.
\end{equation*}

Let $u_h^{\rm full}$ denote the linear element solution on $\Omega$, and let $\mathcal E u_h^{\rm red}$ denote the solution on $\Omega_{++}$ extended to the full square by the two reflections. Define
\begin{equation*}
e_0=
\frac{
\|u_h^{\rm full}-\mathcal E u_h^{\rm red}\|_{L^2(\Omega)}
}{
\|u_h^{\rm full}\|_{L^2(\Omega)}
},
\qquad
e_1=
\frac{
\|u_h^{\rm full}-\mathcal E u_h^{\rm red}\|_{H^1(\Omega)}
}{
\|u_h^{\rm full}\|_{H^1(\Omega)}
}.
\end{equation*}
The two symmetry defects are
\begin{equation*}
\delta_x=
\frac{
\|u_h^{\rm full}-T_{Q_x}^{(0)}u_h^{\rm full}\|_{L^2(\Omega)}
}{
\|u_h^{\rm full}\|_{L^2(\Omega)}
},
\qquad
\delta_y=
\frac{
\|u_h^{\rm full}-T_{Q_y}^{(0)}u_h^{\rm full}\|_{L^2(\Omega)}
}{
\|u_h^{\rm full}\|_{L^2(\Omega)}
}.
\end{equation*}

\begin{figure}[htbp]
\centering
\begin{subfigure}[t]{0.48\linewidth}
    \centering
    \includegraphics[width=\linewidth]{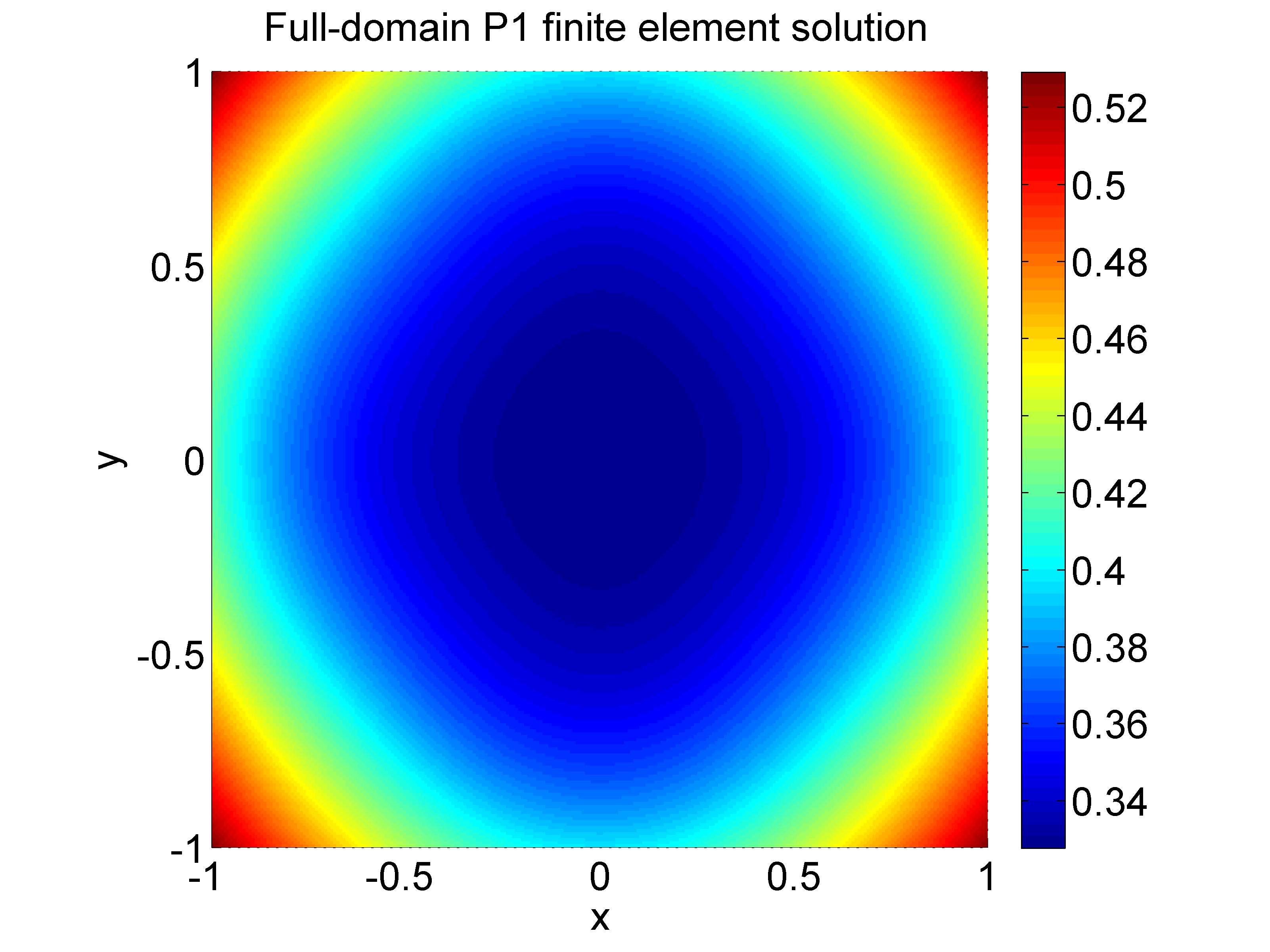}
    \caption{Full-domain solution.}
    \label{fig:example4_full}
\end{subfigure}
\hfill
\begin{subfigure}[t]{0.48\linewidth}
    \centering
    \includegraphics[width=\linewidth]{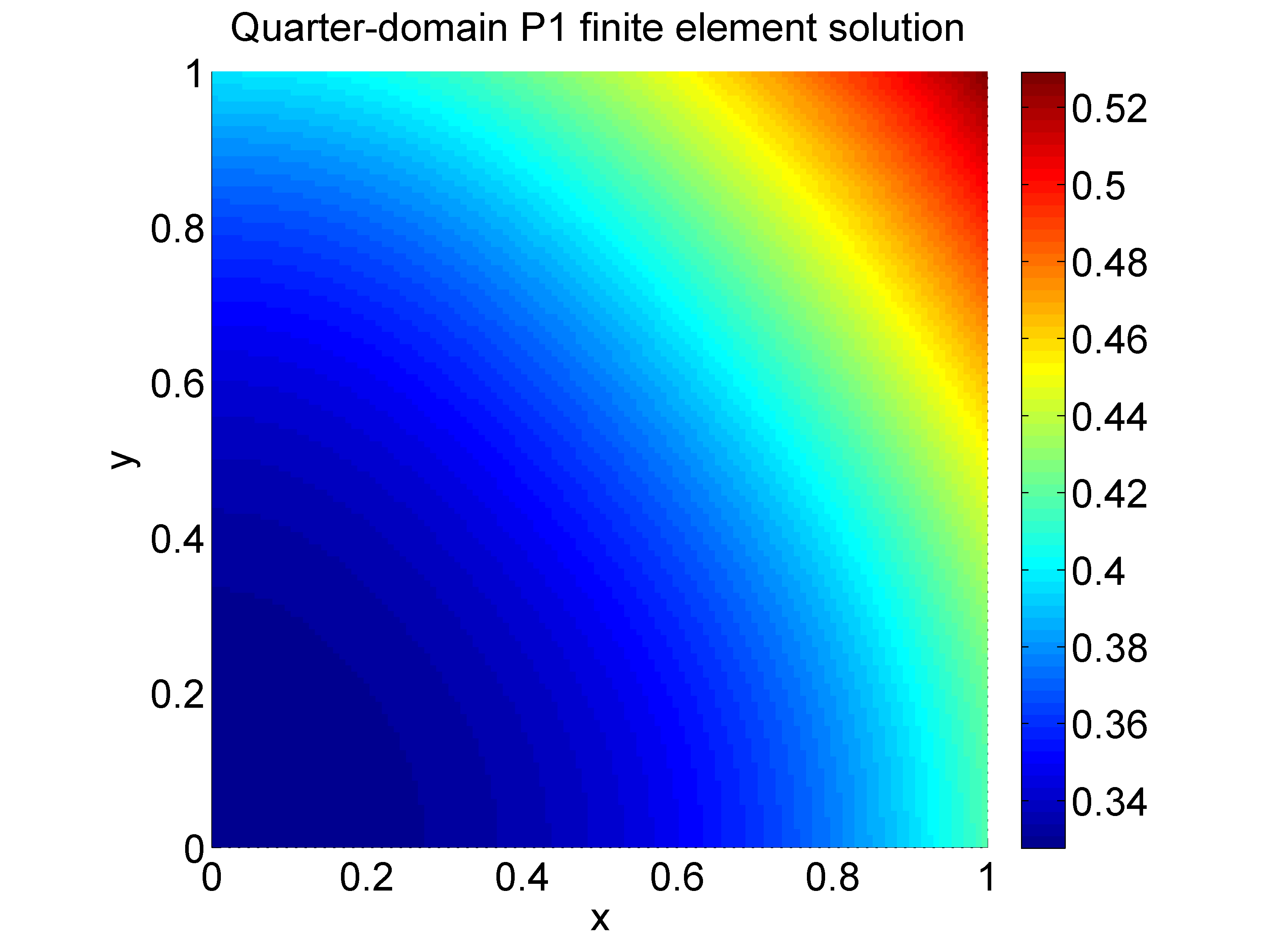}
    \caption{Quarter-domain solution.}
    \label{fig:example4_quad}
\end{subfigure}

\caption{Full-domain and reduced-domain linear finite element solutions for Example~3.}
\label{fig:example4_solution}
\end{figure}

Figure~\ref{fig:example4_solution} shows that the quarter-domain solution reproduces the corresponding part of the full-domain solution, with the same spatial distribution and amplitude. This agreement is consistent with the two reflection symmetries predicted by Theorem~\ref{thm:solution_symmetry} and confirms that the artificial generalized Neumann conditions preserve the physical solution on the reduced domain.

\begin{table}[htbp]
\centering
\caption{Comparison between the full-domain and quarter-domain computations for Example~3.}
\label{tab:example4}
\begin{tabular}{ccccccc}
\hline
Mesh
& $N_{\rm full}$
& $N_{\rm red}$
& $\delta_x$
& $\delta_y$
& $e_0$
& $e_1$
\\
\hline
1 & 1089 & 289& 8.076e-16 & 5.904e-16 & 1.531e-15 & 6.611e-15 \\
2 & 4225 & 1089 & 1.405e-15 & 1.408e-15 & 2.634e-15  & 1.334e-14 \\
3 & 16641& 4225 & 2.438e-15 & 2.121e-15 & 4.250e-15 & 2.677e-14 \\
4 & 66049 & 16641 & 9.285e-15 & 9.522e-15 & 1.479e-14 & 6.671e-14 \\
\hline
\end{tabular}
\end{table}

Table~\ref{tab:example4} gives a quantitative verification of the symmetry reduction. As the mesh is refined, the ratio $N_{\rm full}/N_{\rm red}$ approaches $4$, as expected for a reduction from the full square to one quarter of the domain. The symmetry defects $\delta_x$ and $\delta_y$ remain between $10^{-16}$ and $10^{-14}$, showing that the discrete full-domain solution preserves both reflection symmetries to essentially roundoff accuracy. Likewise, $e_0$ and $e_1$ remain of order $10^{-15}$--$10^{-14}$ and $10^{-14}$--$10^{-13}$, respectively, which demonstrates that the reflected reduced-domain solution and the full-domain solution are numerically indistinguishable. The slightly larger values of $e_1$ are expected because the $H^1$ norm also involves derivatives and is therefore more sensitive to floating-point errors.

This example tests the complete symmetry framework with an anisotropic principal tensor, a nonzero first-order coefficient, variable zeroth-order and Robin coefficients, and nonzero volume and boundary sources. The comparison between the full-domain and quarter-domain numerical solutions provides a direct numerical verification of the symmetry-inheritance theorem and the reduced-domain formulation in a genuinely variable-coefficient setting.

\section{Conclusions and Outlook}
The results show that the symmetry of a Robin boundary value problem can be characterized consistently through the transformation properties of its volume coefficients, boundary data, and source terms. In particular, the Robin condition does not alter the mechanism by which reflection symmetry generates an exact artificial boundary condition: the reduced problem retains the original Robin condition on the physical boundary, while a homogeneous generalized Neumann condition arises naturally on the symmetry interface. This observation provides a direct theoretical basis for symmetry-based domain reduction, and the numerical results confirm that the reduced formulations reproduce the corresponding full-domain solutions.

Future work will address antisymmetric solutions, for which reflection symmetry leads to artificial Dirichlet conditions, and will seek a unified treatment of symmetric and antisymmetric modes. Extensions to vector-valued systems, particularly Maxwell equations with anisotropic constitutive parameters and impedance boundary conditions, are also of interest for symmetry-based reduction in computational electromagnetics.

\end{document}